\definecolor{lblue}{RGB}{0,110,152}
\definecolor{dred}{RGB}{171,67,53}
\newtheorem{theorem}{Theorem}
\newtheorem{proposition}[theorem]{Proposition}
\newtheorem{define}[theorem]{Definition}
\newtheorem{remark}[theorem]{Remark}
\DeclareMathOperator*{\col}{col}
\DeclareMathOperator{\He}{Sym}
\DeclareMathOperator*{\diag}{diag}
\DeclareMathOperator{\eps}{\varepsilon}
\def\E{\mathbb{E}}
\def\P{\mathbb{P}}
\newenvironment{proof}{{\it Proof :~}}{\hfill$\diamondsuit$\\}
\begin{document}

\title{Stability analysis and state-feedback control of LPV systems with piecewise constant parameters subject to spontaneous Poissonian jumps}

\author{Corentin Briat\thanks{Corentin Briat is an independent researcher. email: corentin@briat.info; url: www.briat.info}}

\date{}


\maketitle

\begin{abstract}
LPV systems with piecewise constant parameters subject to spontaneous Poissonian jumps are a class of systems that does not seem to have been thoroughly considered in the literature. We partially fill this gap here by providing sufficient stability and performance analysis conditions stated in terms of infinite-dimensional LMI problems that can be solved using sum of squares programming. A particularity of the obtained conditions lies in  the presence of an integral term leading to some technical difficulties when attempting to obtain convex conditions for the design of a gain-scheduled state-feedback controller. This difficulty is circumvented by relying on a recent result for time-delay systems analysis and an equivalent integral-free LMI condition is obtained. The approach is illustrated through several examples.
\end{abstract}


\section{Introduction}

Linear parameter-varying systems (LPV systems) \cite{Mohammadpour:12,Briat:book1} are a particular class of systems able to represent a wide variety of real-world processes \cite{Mohammadpour:12} ranging from automotive applications \cite{Poussot:10,Sename:13}, aperiodic sampled-data systems \cite{Robert:10} to aerospace systems \cite{Barker:00,Shin:00}, etc. The main advantage of LPV systems lies in the possibility of designing, in a very systematic way, gain-scheduled controllers \cite{Shamma:88phd,Shamma:92,Briat:14e}. Many results have been obtained in this respect; see e.g. \cite{Packard:94a,Apkarian:95,Apkarian:95a,Apkarian:98a,Wu:01,Scherer:01,Scherer:12,Briat:14e,Briat:15d,Briat:17ifacLPV,Briat:17AutomaticaLPV} and references therein. In \cite{Briat:15d}, LPV systems with piecewise constant parameters were considered and several stability analysis and stabilization conditions were proposed. The jumps in the parameter trajectories were assumed to be fully deterministic and to obey some minimum dwell-time condition \cite{Geromel:06b,Goebel:12}. In fact, LPV systems with piecewise constant parameters are a natural generalization of linear switched systems where the mode of the system now takes values within some interval instead of in some finite set. This was later generalized to parameter trajectories that are piecewise differentiable in \cite{Briat:17ifacLPV,Briat:17AutomaticaLPV}. In the vast majority of the existing works, however, the parameters are systematically assumed to evolve deterministically and, this is only very recently, that some results on stochastic LPV/uncertain systems have started to appear; see e.g. \cite{Nagira:15,Hosoe:18}.

The objective of the paper is to extend the framework of \cite{Briat:15d} to the case where the jumps in the parameter trajectories now occur spontaneously and in a Poissonian way (i.e. the time between two successive jumps is exponentially distributed). The resulting system can be written as a piecewise deterministic Markov process or a stochastic hybrid system \cite{Davis:93,Teel:14} where the flow part consists of the deterministic dynamics of the state and the parameters of the LPV system and the jump part consists of the stochastic update rule for the parameters (both the next jump time and the next value of the parameter). Interestingly, while the framework in \cite{Briat:15d} generalized the framework of switched systems to uncountable mode values, the current one generalizes the framework of Markov jump linear systems \cite{Costa:13} to the case where the mode also takes values in some uncountable bounded set.

The contributions of the paper are as follows. Firstly, a sufficient condition for the mean-square (exponential) stability of LPV systems with piecewise constant parameters subject to spontaneous Poissonian jumps is provided. It is based on the use of a parameter-dependent quadratic Lyapunov function that is reminiscent of that used in the context of deterministic LPV systems. Interestingly, this condition combines the flow (deterministic) and the jump (stochastic) parts in a single one. This has to be contrasted with the case of deterministic hybrid systems where two separate conditions, one for the flow part and one for the jump part, are usually obtained \cite{Goebel:12,Briat:15d,Briat:17ifacLPV,Briat:17AutomaticaLPV}. The condition takes the form of a parameter-dependent LMI that has the particularity of involving an integral term, which may be a potential source of difficulty. Assuming that the data of the system and that the infinite-dimensional decision variables in the LMI conditions are polynomial, the integral can be explicitly evaluated and the resulting conditions can be solved using polynomial methods such as sum of squares programming; see e.g. \cite{Parrilo:00,Chesi:10b}. It is important to note here that restricting the data and the decision variables to be polynomials is quite reasonable since the parameters are assumed to take value within a compact set and that any continuous function on a compact set can be approximated by a polynomial with arbitrary precision. This approach, as opposed to gridding methods, is exact in the sense that all the possible parameter values are considered and, in spite of that, it may still offer a more tractable alternative in situations where, for instance, the semidefinite program obtained from gridding exceeds in size that of  the SOS one. In both cases, however, the computational complexity may grow very quickly with the size of the system, the number of parameters and the complexity of the geometry of the parameter set, and may rapidly result in intractable programs. This is an intrinsic limitation of all the approaches based on parameter-dependent LMIs, but there is, to date, no viable alternative.

 The stability condition is then extended to capture an $L_2$ performance criterion, thereby providing a Bounded Real Lemma for this class of systems for the first time. In order to be able to exploit the latter stability condition for control design in an efficient way (i.e. using nonconservative manipulations), it is necessary to get rid of the integral term from the LMI condition. This can be achieved by relying on a result from \cite{Peet:09} that stipulates that a certain slack variable can be added to the condition so that the integral term can be removed. This allows for the derivation of convex design conditions at the expense of an increase of the computational complexity of the problem.

\noindent\textbf{Outline.} The structure of the paper is as follows. Preliminary definitions and results are given in Section \ref{sec:prel}. Section \ref{sec:stab} is devoted to the stability and performance analysis of LPV systems with piecewise constant parameters subject to spontaneous Poissonian jumps. These results are then extended to state-feedback design in Section \ref{sec:stabz}. Computational discussions are provided in Section \ref{sec:comp} and examples in Section \ref{sec:ex}.

\noindent\textbf{Notations.} The cone of symmetric (positive definite) matrices of dimension $n$ is denoted by $\mathbb{S}^n$ ($\mathbb{S}^n_{\succ0}$). For $A,B\in\mathbb{S}^n$, the expression $A\prec(\preceq)B$ means that $A-B$ is negative (semi)definite. The operator $\partial_x$ means differentiation with respect to the variable $x$. For some square matrix $A$, we define $\He[A]=A+A^T$. The Lebesgue measure of a compact set $\mathcal P$ is denoted by $\mu(\mathcal{P})$.


\section{Preliminaries}\label{sec:prel}

We consider in this paper LPV systems with stochastic piecewise constant parameters subject to spontaneous Poissonian jumps described by the following stochastic hybrid system with flow
\begin{equation}\label{eq:mainsyst}
\begin{array}{rcl}
    \dot{x}(t)&=&A(\rho(t))x(t)+B(\rho(t))u(t)+E(\rho(t))w(t)\\
    z(t)&=&C(\rho(t))x(t)+D(\rho(t))u(t)+F(\rho(t))w(t)\\
    x(0)&=&x_0
\end{array}
\end{equation}
where $x,x_0\in\mathbb{R}^n$, $\rho(t)\in\mathcal{P}$, $u\in\mathbb{R}^m$, $w\in\mathbb{R}^p$, $z\in\mathbb{R}^q$ and $y\in\mathbb{R}^r$  are the state of the LPV system, the initial condition, the parameter of the LPV system, the control input, the disturbance and the controlled output, respectively. We assume that the parameters are piecewise constant (i.e. $\dot\rho=0$ in between jumps) and randomly change their values according to a jump process with finite jump intensity. That is,  we have that
\begin{equation}\label{eq:mainsystR}
  \P[\rho(t+h)\in B|\rho(t)=\rho]=\kappa(\rho,B)h+o(h)
\end{equation}
where $\rho\in\mathcal{P}$, $B\subseteq\mathcal{P}-\{\rho\}$ is measurable and where $\kappa:\mathcal{P}\times\mathcal{P}\to\mathbb{R}_{\ge0}$ is the \emph{instantaneous jump rate} such that $\rho\mapsto\kappa(\rho,A)$ is measurable and $A\mapsto\kappa(\rho,A)$ is a positive measure. More specifically, $\kappa(\rho,d\theta)$ are the transition rates whereas $\bar\lambda(\rho)=\textstyle\int_\mathcal{P}\kappa(\rho,d\theta)$ are the intensities. It is clear from its definition that the process $(x(t),\rho(t))_{t\ge0}$ is a Markov process. For simplicity, we assume from now on that $\kappa(\rho,d\theta)=\lambda(\rho,\theta)d\theta$ where $\lambda$ is a polynomial function. 
Similarly, we assume that the matrices in \eqref{eq:mainsyst} are polynomial functions of $\rho$, making the overall system a polynomial stochastic hybrid system; see e.g. \cite{Hespanha:06c}.

The following proposition states a fundamental result describing the evolution of the process  \eqref{eq:mainsyst}-\eqref{eq:mainsystR}:
\begin{proposition}
  The infinitesimal generator $\mathbb{A}$ of the Markov process $(x(t),\rho(t))_{t\ge0}$ defined by \eqref{eq:mainsyst}-\eqref{eq:mainsystR} with $u,w\equiv0$ is given by
    \begin{equation}\label{eq:generator}
    \mathbb{A}f(\rho,x):=\partial_x  f(x,\rho)A(\rho)x+\int_{\mathcal{P}}\lambda(\rho,\theta)[f(x,\theta)-f(x,\rho)]d\theta
  \end{equation}
  where $f$ is any bounded function.
\end{proposition}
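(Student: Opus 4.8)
The plan is to compute $\mathbb{A}$ directly from its definition as the limit
\[
\mathbb{A}f(x,\rho)=\lim_{h\downarrow0}\frac{1}{h}\Big(\E\big[f(x(t+h),\rho(t+h))\,\big|\,x(t)=x,\rho(t)=\rho\big]-f(x,\rho)\Big),
\]
exploiting the fact that \eqref{eq:mainsyst}--\eqref{eq:mainsystR} with $u,w\equiv0$ defines a piecewise deterministic Markov process in the sense of \cite{Davis:93}: between jumps the pair $(x,\rho)$ flows according to $\dot x=A(\rho)x$, $\dot\rho=0$, while jumps of $\rho$ occur with total intensity $\bar\lambda(\rho)=\int_{\mathcal P}\lambda(\rho,\theta)d\theta$, the post-jump value being drawn from the probability measure $\lambda(\rho,\theta)d\theta/\bar\lambda(\rho)$ on $\mathcal P$ (the case $\bar\lambda(\rho)=0$ being trivial since then no jump occurs and the integral term vanishes).

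First I would condition on the number $N$ of jumps in $[t,t+h]$. From the finite-intensity property \eqref{eq:mainsystR} one has $\P[N=0\,|\,\rho(t)=\rho]=1-\bar\lambda(\rho)h+o(h)$, $\P[N=1\,|\,\rho(t)=\rho]=\bar\lambda(\rho)h+o(h)$ and $\P[N\ge2\,|\,\rho(t)=\rho]=o(h)$; since $f$ is bounded, the event $\{N\ge2\}$ contributes only $o(h)$ to the expectation. On $\{N=0\}$ the parameter is frozen at $\rho$ and $x(t+h)=x+hA(\rho)x+o(h)$ along the flow, so a first-order Taylor expansion of $f$ in its first argument gives $f(x(t+h),\rho(t+h))=f(x,\rho)+h\,\partial_xf(x,\rho)A(\rho)x+o(h)$. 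On $\{N=1\}$ the single jump occurs at some time in $[t,t+h]$, so the $x$-displacement over the interval is $O(h)$ and hence negligible once multiplied by $\P[N=1]=O(h)$, while the post-jump parameter has density $\lambda(\rho,\theta)/\bar\lambda(\rho)$; thus this event contributes $\bar\lambda(\rho)h\int_{\mathcal P}\frac{\lambda(\rho,\theta)}{\bar\lambda(\rho)}f(x,\theta)d\theta+o(h)=h\int_{\mathcal P}\lambda(\rho,\theta)f(x,\theta)d\theta+o(h)$. Collecting the three contributions, subtracting $f(x,\rho)$, dividing by $h$ and letting $h\downarrow0$ gives
\[
\mathbb{A}f(x,\rho)=\partial_xf(x,\rho)A(\rho)x-\bar\lambda(\rho)f(x,\rho)+\int_{\mathcal P}\lambda(\rho,\theta)f(x,\theta)d\theta,
\]
and substituting $\bar\lambda(\rho)=\int_{\mathcal P}\lambda(\rho,\theta)d\theta$ yields exactly \eqref{eq:generator}.

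The main obstacle is the rigorous control of the $o(h)$ remainders: (i) showing that the $x$-flow contribution during a jump-containing subinterval is genuinely negligible, which needs boundedness/Lipschitz bounds on $A(\rho)x$ over the relevant compact set together with a short-time bound on the trajectory; (ii) interchanging the limit with the integral over $\mathcal P$ in the $\{N=1\}$ term, which is unproblematic here because $\mathcal P$ is compact, $\lambda$ is polynomial (hence bounded on $\mathcal P\times\mathcal P$) and $f$ is bounded; and (iii) being precise about the domain of $\mathbb A$ — the computation is valid for $f$ that are, say, continuously differentiable in $x$ with the jump term well defined, rather than for arbitrary bounded $f$. Alternatively, all of these estimates can be bypassed by invoking the general expression for the (extended) generator of a PDMP from \cite{Davis:93} and specializing it to the present flow field $A(\rho)x$ and jump kernel $\lambda(\rho,\theta)d\theta$.
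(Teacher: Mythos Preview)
The paper does not actually prove this proposition: it is presented in the preliminaries as a ``fundamental result'' and left unproved, essentially being the specialization of the standard PDMP generator from \cite{Davis:93} to the flow $\dot x=A(\rho)x$, $\dot\rho=0$ and the jump kernel $\kappa(\rho,d\theta)=\lambda(\rho,\theta)d\theta$. Your derivation --- conditioning on the number of jumps in $[t,t+h]$, expanding the flow to first order on $\{N=0\}$, and extracting the integral term from $\{N=1\}$ --- is the correct and standard heuristic computation, and the final expression you obtain is exactly \eqref{eq:generator}. Your closing remark is also well taken: the statement as written (``$f$ any bounded function'') is imprecise, since the drift term $\partial_x f(x,\rho)A(\rho)x$ requires at least $C^1$-regularity of $f$ in $x$; this is a minor sloppiness in the paper, not in your argument.
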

The Dynkin's formula is given in this case by
\begin{equation}\label{eq:dynkin}
  \E[f(x(\tau),\rho(\tau))]=f(x(0),\rho(0))+\int_0^\tau\E[(\mathbb{A}f)(x(s),\rho(s))]ds
\end{equation}
for any stopping time $\tau$.
\begin{define}
  The system \eqref{eq:mainsyst}-\eqref{eq:mainsystR} is said to be mean-square stable if for any initial condition $(x_0,\rho_0)$, we have that $\E[||x(t)||_2^2]\to0$ as $t\to\infty$. It is said to be mean-square exponentially stable with rate $\alpha>0$ if there exists a $\beta\ge1$ such that $\E[||x(t)||_2^2]\le\beta e^{-2\alpha t}||x(0)||_2^2$.
\end{define}
\begin{define}
The $L_2$-norm of a signal $w:[0,\infty)\mapsto\mathbb{R}^n$ is defined as
\begin{equation}
  ||w||_{L_2}:=\left(\int_0^\infty \E||w(s)||_2^2ds\right)^{1/2}.
\end{equation}
When $||w||_{L_2}<\infty$, we say that $w\in L_2$.
\end{define}
\begin{define}
  The (stochastic) $L_2$-gain of the map $L_2\ni w\mapsto z\in L_2$ as induced by the system \eqref{eq:mainsyst}-\eqref{eq:mainsystR} with $u\equiv0,x_0=0$ is defined as
  \begin{equation}
    ||w\mapsto z||_{L_2-L_2}:=\sup_{||w||_{L_2}=1}||z||_{L_2}.
  \end{equation}
\end{define}


%

%


\section{Stability analysis and $L_2$-performance}\label{sec:stab}

In this section, we address first the problem of establishing the (exponential) mean-square stability of the process \eqref{eq:mainsyst}-\eqref{eq:mainsystR}

\subsection{Asymptotic and exponential stability}

We first address the problem of establishing the asymptotic/exponential mean-square stability of the system  \eqref{eq:mainsyst}-\eqref{eq:mainsystR}. This is formulated in the following result:
\begin{theorem}\label{th:stab}
  Assume that there exists a matrix-valued function $P:\mathcal{P}\mapsto\mathbb{S}_{\succ0}^n$ and a scalar $\alpha>0$ such that the LMI
  \begin{equation}\label{eq:stab}
    \He[P(\rho)A(\rho)]+\int_{\mathcal{P}}\lambda(\rho,\theta)[P(\theta)-P(\rho)]d\theta+2\alpha P(\rho)\preceq0
  \end{equation}
  holds for all $\rho\in\mathcal{P}$. Then, the system \eqref{eq:mainsyst}-\eqref{eq:mainsystR} with $u\equiv0$ and $w\equiv0$ is mean-square exponentially stable with rate $\alpha$.
\end{theorem}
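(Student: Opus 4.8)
The plan is to use the parameter-dependent quadratic Lyapunov function $V(x,\rho)=x^TP(\rho)x$ and to show, via Dynkin's formula \eqref{eq:dynkin}, that the LMI \eqref{eq:stab} forces $\E[V(x(t),\rho(t))]$ to decay at an exponential rate $2\alpha$. First I would compute the action of the infinitesimal generator \eqref{eq:generator} on $V$: since $\partial_x V(x,\rho)=2x^TP(\rho)$, we get $\mathbb{A}V(x,\rho)=x^T\He[P(\rho)A(\rho)]x+\int_{\mathcal P}\lambda(\rho,\theta)[x^TP(\theta)x-x^TP(\rho)x]d\theta = x^T\big(\He[P(\rho)A(\rho)]+\int_{\mathcal P}\lambda(\rho,\theta)[P(\theta)-P(\rho)]d\theta\big)x$. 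The LMI \eqref{eq:stab} then says exactly that $\mathbb{A}V(x,\rho)\le -2\alpha\, x^TP(\rho)x = -2\alpha V(x,\rho)$ for all $(x,\rho)$.

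Next I would apply Dynkin's formula to the function $e^{2\alpha t}V(x,\rho)$ rather than $V$ directly (equivalently, work with the generator of the space-time process). A short computation gives $\mathbb{A}\big(e^{2\alpha s}V\big)(x,\rho)=e^{2\alpha s}\big(2\alpha V(x,\rho)+\mathbb{A}V(x,\rho)\big)\le0$ by the bound just established. Plugging this into \eqref{eq:dynkin} with $\tau=t$ yields $\E[e^{2\alpha t}V(x(t),\rho(t))]\le V(x(0),\rho(0))$, i.e. $\E[x(t)^TP(\rho(t))x(t)]\le e^{-2\alpha t}x_0^TP(\rho_0)x_0$. One technical point worth mentioning is that $V$ is not bounded, so Proposition (the formula for $\mathbb{A}$) and Dynkin's formula as stated for bounded $f$ must be justified for $V$; this is standard and can be handled by a localization/stopping-time argument (stopping at the exit time of a large ball and letting the radius go to infinity, using that trajectories do not explode in finite time since the flow is linear and jumps leave $x$ unchanged), so I would simply invoke this.

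To conclude mean-square exponential stability with rate $\alpha$ in the sense of the definition, I would use the uniform positive-definiteness of $P$. Since $\mathcal P$ is compact and $P$ is continuous (polynomial) with $P(\rho)\succ0$ for every $\rho$, there exist constants $0<a\le b$ with $aI\preceq P(\rho)\preceq bI$ for all $\rho\in\mathcal P$; hence $a\,\E[\lVert x(t)\rVert_2^2]\le\E[x(t)^TP(\rho(t))x(t)]\le e^{-2\alpha t}x_0^TP(\rho_0)x_0\le b\,e^{-2\alpha t}\lVert x_0\rVert_2^2$, which gives $\E[\lVert x(t)\rVert_2^2]\le (b/a)\,e^{-2\alpha t}\lVert x_0\rVert_2^2$, i.e. the claim with $\beta=b/a\ge1$. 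This also trivially implies $\E[\lVert x(t)\rVert_2^2]\to0$, so the system is mean-square stable as well.

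The only genuine obstacle is the boundedness issue in applying Dynkin's formula to the quadratic $V$; the generator identity and the LMI manipulation are routine. I would address it by the localization argument sketched above — stop the process at $\tau_N=\inf\{t:\lVert x(t)\rVert_2\ge N\}\wedge t$, apply \eqref{eq:dynkin} with the (now effectively bounded) test function on $\{\lVert x\rVert_2< N\}$, obtain $\E[e^{2\alpha(t\wedge\tau_N)}V(x(t\wedge\tau_N),\rho(t\wedge\tau_N))]\le V(x_0,\rho_0)$, and pass to the limit $N\to\infty$ using Fatou's lemma together with nonexplosion of the linear flow.
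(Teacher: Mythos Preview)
Your proposal is correct and follows essentially the same approach as the paper: the paper also uses $V(x,\rho)=x^TP(\rho)x$, observes that \eqref{eq:stab} yields $\mathbb{A}V\le-2\alpha V$, integrates to get $\E[V(x(t),\rho(t))]\le e^{-2\alpha t}V(x_0,\rho_0)$, and concludes via the uniform bounds $\bar\lambda_m I\preceq P(\rho)\preceq\bar\lambda_M I$. Your version is in fact more careful on two points the paper leaves implicit: you spell out the generator computation, and you flag (and handle by localization) the fact that Dynkin's formula \eqref{eq:dynkin} is stated for bounded $f$ while $V$ is quadratic.
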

\begin{proof}
  To prove this result, let us consider the function $V(x,\rho)=x^TP(\rho)x$ where $P(\cdot)$ is as in the result. The feasibility LMI condition implies that $\mathbb{A}V(x,\rho)\le-2\alpha V(x,\rho)$.
  Hence, we have that
    \begin{equation}
  \dfrac{d}{dt}\E[V(x(t),\rho(t))]\le-2\alpha\E[V(x(t),\rho(t))]
  \end{equation}
  and, therefore, $\E[V(x(t),\rho(t))]\le V(x(0),\rho(0))e^{-2\alpha t}$.
  Since $\bar\lambda_m||x||_2^2\le V(x,\rho)\le \bar\lambda_M||x||_2^2$ for all $x\in\mathbb{R}^n$ where $\bar\lambda_m:=\min_{\rho\in\mathcal P}\lambda_{min}(P(\rho))$ and $\bar\lambda_M:=\max_{\rho\in\mathcal P}\lambda_{max}(P(\rho))$, then $\bar\lambda_m\E[||x||_2^2]\le \E[V(x,\rho)]\le \bar\lambda_M\E[||x||_2^2]$ and we get that
  \begin{equation}
  \E[||x(t)||_2^2]\le \beta e^{-2\alpha t}||x_0||_2^2,\ \beta=\bar\lambda_M/\bar\lambda_m
  \end{equation}
  which proves the result.
\end{proof}
%

The above result is interesting in the sense that it shows that unlike in the deterministic case, the stability of the LPV system does not imply that the matrix $A(\rho)$ be Hurwitz stable; see e.g. \cite{Briat:book1,Briat:15d}. Indeed, it is only necessary that the matrix $A(\rho)-\bar{\lambda}(\rho)I$ be Hurwitz stable where $\textstyle\bar{\lambda}(\rho)=\int_\mathcal{P}\lambda(\rho,\theta)d\theta$. 
The integral term is of particular nature and will need to be carefully taken care of. Since it is assumed that both $P(\cdot)$ and $\lambda(\cdot,\cdot)$ are polynomials, then integration can be performed very easily as polynomials are stable by integration.

\subsection{$L_2$-performance - Bounded Real Lemma}

We now extend the asymptotic/exponential stability result to account for an $L_2$-performance characterization:
\begin{theorem}\label{th:stabL2}
Assume that there exist a matrix-valued function $P:\mathcal{P}\mapsto\mathbb{S}_{\succ0}^n$ and a scalar $\gamma>0$ such that the LMI \eqref{eq:stabL2}
\begin{figure*}
\vspace{3mm}
  \begin{equation}\label{eq:stabL2}
    \begin{bmatrix}
      \He[P(\rho)A(\rho)]+\int_{\mathcal{P}}\lambda(\rho,\theta)[P(\theta)-P(\rho)]d\theta+C(\rho)^TC(\rho) & P(\rho)E(\rho)+C(\rho)^TF(\rho)\\
      \star & -\gamma^2I_p+F(\rho)^TF(\rho)
    \end{bmatrix}\prec0
  \end{equation}
  \end{figure*}
  holds for all $\rho\in\mathcal{P}$. Then, the system \eqref{eq:mainsyst}-\eqref{eq:mainsystR} with $u,w\equiv 0$ is mean-square stable and the $L_2$-gain of the operator $w\mapsto z$ is smaller than $\gamma$.
\end{theorem}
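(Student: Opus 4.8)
The plan is to mimic the proof of Theorem~\ref{th:stab} almost verbatim, using the same parameter-dependent quadratic function $V(x,\rho)=x^TP(\rho)x$, now playing the role of a \emph{storage function}, while keeping the disturbance $w$ in the flow. The crucial observation is that the $2\times2$-block matrix $M(\rho)$ on the left-hand side of \eqref{eq:stabL2} is exactly the Gram matrix, in the variables $(x,w)$, of the quadratic form $\mathbb{A}^wV(x,\rho)+z^Tz-\gamma^2w^Tw$, where $\mathbb{A}^w$ denotes the infinitesimal generator of \eqref{eq:mainsyst}-\eqref{eq:mainsystR} driven by $w$ with $u\equiv0$, and $z=C(\rho)x+F(\rho)w$. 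Thus \eqref{eq:stabL2} is a dissipation inequality in disguise, and the theorem follows by integrating that inequality along trajectories by means of Dynkin's formula.

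Concretely, I would first modify \eqref{eq:generator} to account for the disturbance: the jump part is unchanged and the flow part acquires the term $\partial_xV(x,\rho)E(\rho)w$, so that
\[
  \mathbb{A}^wV(x,\rho)=2x^TP(\rho)\big(A(\rho)x+E(\rho)w\big)+\int_{\mathcal{P}}\lambda(\rho,\theta)\big[V(x,\theta)-V(x,\rho)\big]d\theta .
\]
Expanding $\begin{bmatrix}x\\w\end{bmatrix}^TM(\rho)\begin{bmatrix}x\\w\end{bmatrix}$ and using $z^Tz=(C(\rho)x+F(\rho)w)^T(C(\rho)x+F(\rho)w)$, one checks by direct identification of terms that this equals $\mathbb{A}^wV(x,\rho)+z^Tz-\gamma^2w^Tw$. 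Since $\mathcal{P}$ is compact and the data (hence $M(\cdot)$, the integral term being polynomial in $\rho$) are continuous, \eqref{eq:stabL2} gives $M(\rho)\preceq-\varepsilon I$ uniformly for some $\varepsilon>0$, whence
\[
  \mathbb{A}^wV(x,\rho)\le-z^Tz+(\gamma^2-\varepsilon)\,w^Tw\quad\text{for all } x\in\mathbb{R}^n,\ w\in\mathbb{R}^p,\ \rho\in\mathcal{P}.
\]

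For the mean-square stability claim I would specialise this to $w=0$, obtaining $\He[P(\rho)A(\rho)]+\int_{\mathcal{P}}\lambda(\rho,\theta)[P(\theta)-P(\rho)]d\theta\preceq-\varepsilon I\prec0$ on $\mathcal{P}$; as $P(\cdot)\succ0$ is continuous on the compact set $\mathcal{P}$ it is uniformly bounded below there, so there is $\alpha>0$ for which \eqref{eq:stab} holds, and Theorem~\ref{th:stab} delivers mean-square (even exponential) stability. For the $L_2$-gain bound I would take $x_0=0$, $u\equiv0$ and $w\in L_2$, and apply Dynkin's formula \eqref{eq:dynkin} to $V$ on $[0,\tau]$; using the dissipation inequality and $V\ge0$,
\[
  0\le\E[V(x(\tau),\rho(\tau))]=\int_0^\tau\E\big[\mathbb{A}^wV(x(s),\rho(s))\big]ds\le\int_0^\tau\E\big[(\gamma^2-\varepsilon)\|w(s)\|_2^2-\|z(s)\|_2^2\big]ds ,
\]
so $\int_0^\tau\E\|z(s)\|_2^2\,ds\le(\gamma^2-\varepsilon)\int_0^\tau\E\|w(s)\|_2^2\,ds\le(\gamma^2-\varepsilon)\|w\|_{L_2}^2$ for every $\tau\ge0$. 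Letting $\tau\to\infty$ (monotone convergence, the integrand being nonnegative) gives $\|z\|_{L_2}^2\le(\gamma^2-\varepsilon)\|w\|_{L_2}^2$, hence $z\in L_2$ and $\|w\mapsto z\|_{L_2-L_2}\le\sqrt{\gamma^2-\varepsilon}<\gamma$.

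The only genuinely non-routine step — and the one I would treat with care — is the passage from the machinery stated in the excerpt for $u,w\equiv0$ to the disturbed process: one must justify that $\mathbb{A}^w$ above is the correct generator along trajectories produced by a given $w\in L_2$ (standard for piecewise-deterministic Markov processes with a measurable, locally bounded exogenous input), and, since $V(x,\rho)=x^TP(\rho)x$ is \emph{not} a bounded function, Dynkin's formula should be applied along a localising sequence of stopping times $\tau_k\wedge\tau$, the limit $k\to\infty$ being controlled by combining the already-established mean-square stability with $w\in L_2$ to bound $\E[V(x(\tau_k\wedge\tau))]$. Both are classical, so I would only sketch them.
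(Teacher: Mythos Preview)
Your proposal is correct and follows essentially the same route as the paper: pre- and post-multiply the LMI \eqref{eq:stabL2} by $\col(x,w)$ to recognise the dissipation inequality $\mathbb{A}^wV+\|z\|_2^2-\gamma^2\|w\|_2^2\le0$, then integrate via Dynkin's formula. The only noteworthy difference is that you conclude by simply dropping the nonnegative term $\E[V(x(\tau),\rho(\tau))]$, whereas the paper lets $t\to\infty$ and argues that $\E[V(x(t),\rho(t))]\to0$ from mean-square stability together with $w\in L_2$; your variant is slightly cleaner and avoids that extra step, and your care with the uniform $\varepsilon$-margin and the localisation of Dynkin's formula for the unbounded $V$ goes beyond what the paper spells out.
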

\begin{proof}
  From Theorem \ref{th:stab}, the condition of Theorem \ref{th:stabL2} implies the mean-square stability of the system  \eqref{eq:mainsyst}-\eqref{eq:mainsystR} whenever $u$ and $w$ are identically zero. Pre- and post-multiplying the LMI \eqref{eq:stabL2} by $\col(x(t),w(t))^T$ and $\col(x(t),w(t))$,  and invoking Dynkin's formula yields
  \begin{equation}
    \E[V(x(t),\rho(t))]+\E\left[\int_0^t\left(||z(s)||_2^2-\gamma^2||w(s)||_2^2\right)ds\right]\le0
  \end{equation}
  where we have assumed that $x_0=0$. Since the system is mean-square stable and $w\in L_2$, then we have that $\E[V(x(t),\rho(t))]\to0$ as $t\to\infty$ and we obtain
  \begin{equation}
    ||z||_{L_2}^2<\gamma^2||w||_{L_2}^2
  \end{equation}
  which implies that the $L_2$-gain of the map $w\mapsto z$ is less than $\gamma$. The proof is completed.
\end{proof}

\section{Stabilization with $L_2$-performance by state-feedback}\label{sec:stabz}

In this section, we aim at obtaining convex conditions for the design of a gain-scheduled state-feedback controller of the form
\begin{equation}\label{eq:sf}
  u=K(\rho)x
\end{equation}
where $K:\mathcal{P}\mapsto\mathbb{R}^{m\times n}$ is the parameter-dependent gain. The goal is to find constructive sufficient stabilization conditions for the gain $K(\cdot)$ such that the closed-loop system \eqref{eq:mainsyst}-\eqref{eq:mainsystR}-\eqref{eq:sf} is mean-square stable in the absence of disturbance $w$ and that the map $w\mapsto z$ has a guaranteed $L_2$-gain of at most $\gamma$. This is stated in the following result:
\begin{theorem}\label{eq:stabzL2}
  Assume that there exist matrix-valued functions $Q:\mathcal{P}\mapsto\mathbb{S}_{\succ0}^n$, $U:\mathcal{P}\mapsto\mathbb{R}^{m\times n}$ and $Z:\mathcal{P}\times\mathcal{P}\mapsto\mathbb{S}^n$ such that
  \begin{equation}
    \int_{\mathcal{P}}Z(\rho,\theta)d\theta=0
  \end{equation}
  holds for all $\rho\in\mathcal{P}$ and such that the LMI \eqref{eq:LMIstabzL2}
  \begin{figure*}[!t]
\normalsize
 \begin{equation}\label{eq:LMIstabzL2}
  \begin{bmatrix}
    \He[A(\rho)Q(\rho)+B(\rho)U(\rho)]-\bar{\lambda}(\rho)Q(\rho)+Z(\rho,\theta)& E(\rho) & (C(\rho)Q(\rho)+D(\rho)U(\rho))^T & \mu(\mathcal{P})\lambda(\rho,\theta)^{1/2}Q(\rho)\\
    \star &  -\gamma^2I_p & F(\rho)^T & 0\\
    \star & \star & -I_q & 0\\
     \star & \star & \star & -\mu(\mathcal{P})P(\theta)
  \end{bmatrix}\prec0
  \end{equation}
\end{figure*}
%
%
%
%
%
  holds for all $\rho,\theta\in\mathcal{P}$ where $\mu(\mathcal{P})$ is the Lebesgue measure of the set $\cal P$. Then, the closed-loop system \eqref{eq:mainsyst}-\eqref{eq:mainsystR}-\eqref{eq:sf} is mean-square stable  in the absence of disturbance $w$ and the $L_2$-gain of the map $w\mapsto z$ is at most $\gamma$.
\end{theorem}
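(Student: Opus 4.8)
The plan is to derive \eqref{eq:LMIstabzL2} as an integral-free, convex reformulation of the Bounded Real Lemma (Theorem~\ref{th:stabL2}) applied to the closed loop, and then to read the chain of implications backwards: feasibility of \eqref{eq:LMIstabzL2} together with $\int_{\mathcal P}Z(\rho,\theta)\,d\theta=0$ will imply the hypothesis of Theorem~\ref{th:stabL2} for the closed-loop system \eqref{eq:mainsyst}-\eqref{eq:mainsystR}-\eqref{eq:sf}, whence the conclusion.

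First I would substitute \eqref{eq:sf} into \eqref{eq:mainsyst}, so that the closed-loop data become $A(\rho)+B(\rho)K(\rho)$, $C(\rho)+D(\rho)K(\rho)$, $E(\rho)$, $F(\rho)$, and write the corresponding inequality \eqref{eq:stabL2}; it is bilinear in $(P(\cdot),K(\cdot))$. Then apply the linearizing substitution $Q(\rho):=P(\rho)^{-1}$, $U(\rho):=K(\rho)Q(\rho)$ and a congruence transformation by $\diag(Q(\rho),I_p)$. Using $Q(\rho)P(\rho)Q(\rho)=Q(\rho)$, the $(1,1)$ block becomes $\He[A(\rho)Q(\rho)+B(\rho)U(\rho)]-\bar\lambda(\rho)Q(\rho)+\int_{\mathcal P}\lambda(\rho,\theta)Q(\rho)Q(\theta)^{-1}Q(\rho)\,d\theta+(C(\rho)Q(\rho)+D(\rho)U(\rho))^{T}(C(\rho)Q(\rho)+D(\rho)U(\rho))$, the $(1,2)$ block becomes $E(\rho)+(C(\rho)Q(\rho)+D(\rho)U(\rho))^{T}F(\rho)$, and the $(2,2)$ block is unchanged. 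A Schur complement extracting the quadratic output terms $(CQ+DU)^{T}(CQ+DU)$ and $F^{T}F$ introduces the $-I_q$ block and leaves a $3\times3$-block inequality whose only remaining obstruction to convexity is the integral term $\int_{\mathcal P}\lambda(\rho,\theta)Q(\rho)Q(\theta)^{-1}Q(\rho)\,d\theta$ in the $(1,1)$ entry.

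The crux is to remove this integral, and here I would use the slack-variable device of \cite{Peet:09}. Write the $3\times3$-block inequality as $\mathcal M_0(\rho)+\diag\!\big(\int_{\mathcal P}\lambda(\rho,\theta)Q(\rho)Q(\theta)^{-1}Q(\rho)\,d\theta,0,0\big)\prec0$, where $\mathcal M_0(\rho)$ gathers the integral-free blocks (its $(1,1)$ entry being $\He[AQ+BU]-\bar\lambda Q$). Introduce $Z:\mathcal P\times\mathcal P\mapsto\mathbb S^n$ with $\int_{\mathcal P}Z(\rho,\theta)\,d\theta=0$. Since $\tfrac1{\mu(\mathcal P)}\int_{\mathcal P}\mathcal M_0(\rho)\,d\theta=\mathcal M_0(\rho)$ and $\int_{\mathcal P}Z(\rho,\theta)\,d\theta=0$, the left-hand side equals $\int_{\mathcal P}\big[\tfrac1{\mu(\mathcal P)}\mathcal M_0(\rho)+\diag(\lambda(\rho,\theta)Q(\rho)Q(\theta)^{-1}Q(\rho)+Z(\rho,\theta),0,0)\big]\,d\theta$, and since the integrand is continuous on the compact set $\mathcal P$, this integral is negative definite as soon as the integrand is negative definite for every $\theta\in\mathcal P$. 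Multiplying that pointwise inequality by $\mu(\mathcal P)$ (rescaling $Z$, which preserves $\int_{\mathcal P}Z\,d\theta=0$) puts it in the form of $\mathcal M_0(\rho)$ with $(1,1)$ entry $\He[AQ+BU]-\bar\lambda Q+Z(\rho,\theta)$ plus the term $\mu(\mathcal P)\lambda(\rho,\theta)Q(\rho)Q(\theta)^{-1}Q(\rho)$ in the $(1,1)$ entry; a final Schur complement on this term introduces the fourth block row and column of \eqref{eq:LMIstabzL2}, producing exactly \eqref{eq:LMIstabzL2}. The freedom in $Z$ is what makes this reduction essentially tight, so the conservatism introduced is minimal.

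To conclude, I would run the argument in reverse: feasibility of \eqref{eq:LMIstabzL2} for all $\rho,\theta\in\mathcal P$ under $\int_{\mathcal P}Z(\rho,\theta)\,d\theta=0$ yields, by the reverse Schur complement and integration over $\theta$, the $3\times3$-block inequality, hence the congruence-transformed closed-loop \eqref{eq:stabL2}, hence (congruence by $\diag(P(\rho),I_p)$ with $P(\rho)=Q(\rho)^{-1}$) the hypothesis of Theorem~\ref{th:stabL2} for the closed-loop system; Theorem~\ref{th:stabL2} then gives mean-square stability for $w\equiv0$ and an $L_2$-gain of $w\mapsto z$ at most $\gamma$, with the gain recovered as $K(\rho)=U(\rho)Q(\rho)^{-1}$. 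The step I expect to be delicate is precisely the integral elimination: one must check that collapsing the single integral inequality into a $\theta$-pointwise inequality is lossless thanks to $Z$, and that the residual $\theta$-dependent term becomes affine after a single Schur complement (using the inverse of $Q(\theta)$) — both resting on the structure borrowed from \cite{Peet:09}. The remaining manipulations are routine congruence and Schur-complement bookkeeping.
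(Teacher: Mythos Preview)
Your proposal is correct and follows essentially the same route as the paper: substitute the closed-loop data into the Bounded Real Lemma \eqref{eq:stabL2}, apply the congruence $\diag(Q(\rho),I_p)$ with $Q=P^{-1}$ and the change of variables $U=KQ$, peel off the output terms by a Schur complement to reach a $3\times3$ block inequality with the sole nonconvex ingredient $\int_{\mathcal P}\lambda(\rho,\theta)Q(\rho)P(\theta)Q(\rho)\,d\theta$, invoke the slack-variable/averaging device of \cite{Peet:09} to replace the integral by its $\theta$-pointwise surrogate (scaled by $\mu(\mathcal P)$ and compensated by $Z$ with $\int_{\mathcal P}Z(\rho,\theta)\,d\theta=0$), and finish with a last Schur complement producing the fourth block row/column of \eqref{eq:LMIstabzL2}. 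Your write-up is in fact more explicit than the paper's on \emph{why} the \cite{Peet:09} step works (the averaging identity $\tfrac{1}{\mu(\mathcal P)}\int_{\mathcal P}\mathcal M_0(\rho)\,d\theta=\mathcal M_0(\rho)$ combined with $\int_{\mathcal P}Z\,d\theta=0$), and you correctly state the controller recovery as $K(\rho)=U(\rho)Q(\rho)^{-1}$, whereas the paper's displayed change of variables contains a typo.
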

\begin{proof}
  First make the substitutions $A\leftarrow A_{cl}:=A+BK$ and $C\leftarrow C_{cl}:=C+DK$ in the LMI condition \eqref{eq:stabL2}. In an attempt to obtain a convex condition in the controller gain, we perform the standard congruence transformation with respect to the matrix $\diag(Q(\rho),I_p)$ where $Q(\rho)=P(\rho)^{-1}$ and, after a Schur complement,  we get that
    \begin{equation}
    \begin{bmatrix}\label{eq:stabL2b}
      \He[A_{cl}(\rho)Q(\rho)]+\mathcal{I} & E(\rho) & Q(\rho)C_{cl}(\rho)^T\\
      \star & -\gamma^2I_p & F(\rho)^T\\
      \star & \star & -I_q
    \end{bmatrix}\prec0
  \end{equation}
  where
  $$\mathcal{I}=-\bar\lambda(\rho)Q(\rho)+\int_{\mathcal{P}}\lambda(\rho,\theta)Q(\rho)P(\theta)Q(\rho)d\theta.$$ The difficulty here lies at the level of the integral term. If this integral was a sum as in standard Markov jump linear systems, then simple successive Schur complements will make the condition an LMI. In order to solve this problem, we use a result from \cite{Peet:09} that stipulates that the condition \eqref{eq:stabL2b} holds for all $\rho\in\mathcal{P}$ if and only if  there exists a matrix-valued function $Z:\mathcal{P}\times\mathcal{P}\mapsto\mathbb{S}^n$ verifying $\textstyle\int_{\mathcal{P}}Z(\rho,\theta)d\theta=0$ for all $\rho\in\mathcal{P}$ such that
      \begin{equation}
    \begin{bmatrix}\label{eq:stabL2b}
      \He[A_{cl}(\rho)Q(\rho)]+\mathcal{I}' & E(\rho) & Q(\rho)C_{cl}(\rho)^T\\
      \star & -\gamma^2I_p & F(\rho)^T\\
      \star & \star & -I_q
    \end{bmatrix}\prec0
  \end{equation}
  holds for $\rho,\theta\in\mathcal{P}$ where $$\mathcal{I}'=-\bar\lambda(\rho)Q(\rho)+\mu(\mathcal{P})\lambda(\rho,\theta)Q(\rho)P(\theta)Q(\rho)+Z(\rho,\theta).$$ A Schur complement together with the change of variables $U(\rho)=K(\rho)Q(\rho)^{-1}$ yield the result.
  \end{proof}

\section{Computational considerations}\label{sec:comp}

We discuss here some computational aspects related to solving the LMI conditions and to the simulation of the system.

\subsection{Solving for the LMI conditions}

The conditions formulated in the results in the previous sections are infinite-dimensional semidefinite programs and can not be solved directly. To make them tractable, we propose to consider an approach based on sum of squares programming \cite{Parrilo:00} that will result in an approximate finite-dimensional semidefinite program which can then be solved using standard solvers such as SeDuMi \cite{Sturm:01a}. The conversion to a semidefinite program can be performed using the package SOSTOOLS \cite{sostools3} to which we input the SOS program corresponding to the considered conditions. We say that a parameter-dependent symmetric matrix $M(\cdot)$ is a sum of squares matrix, if there exists a matrix $L(\cdot)$ such that $M(\cdot)=L(\cdot)^TL(\cdot)$. Obviously, a sum of squares matrix is positive semidefinite.

We illustrate below how an SOS program associated with some given conditions can be obtained. We assume here that the compact set $\mathcal{P}$ is semialgebraic, that is, it can be described as
\begin{equation}
  \mathcal{P}=:\left\{\theta\in\mathbb{R}^N: g_{i}(\theta)\ge0, i=1,\ldots,M\right\}
\end{equation}
where $g_i:\mathbb{R}^N\mapsto\mathbb{R}$, $i=1,\ldots,M$, are some known polynomials. This class notably encompass boxes, ellipsoids, etc. We have the following result:
\begin{proposition}\label{prog:periodic}
  Let $\eps,\alpha>0$ be given and  assume that the sum of squares program in Box \ref{box} is feasible. Then, the conditions of Theorem \ref{th:stab} hold with the computed polynomial matrix $P(\rho)$. As a result, the system \eqref{eq:mainsyst}-\eqref{eq:mainsystR} with $u,w\equiv0$ is exponentially mean-square stable with rate $\alpha$.
\end{proposition}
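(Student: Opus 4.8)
The plan is to show that feasibility of the sum of squares program in Box~\ref{box} forces the two requirements of Theorem~\ref{th:stab}---namely $P(\rho)\succ0$ and the matrix inequality~\eqref{eq:stab}---to hold at every $\rho\in\mathcal{P}$, after which the conclusion is immediate from Theorem~\ref{th:stab}. The mechanism is the elementary fact that a sum of squares matrix is positive semidefinite everywhere, combined with the Positivstellensatz-type use of the defining polynomials $g_1,\dots,g_M$ of $\mathcal{P}$ as multipliers: a certificate of the form ``$N(\rho)-\sum_i g_i(\rho)S_i(\rho)$ is an SOS matrix with each $S_i$ an SOS matrix'' evaluates, at any fixed $\rho_\star\in\mathcal{P}$, to $N(\rho_\star)\succeq\sum_i g_i(\rho_\star)S_i(\rho_\star)\succeq0$, since $g_i(\rho_\star)\ge0$ and $S_i(\rho_\star)\succeq0$ there.

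First I would dispose of the integral term. Since $P(\cdot)$ and $\lambda(\cdot,\cdot)$ are polynomials and $\mathcal{P}$ is compact, writing $\lambda(\rho,\theta)=\sum_{a,b}c_{ab}\rho^a\theta^b$ gives
\begin{align*}
\int_{\mathcal{P}}\lambda(\rho,\theta)\bigl[P(\theta)-P(\rho)\bigr]d\theta
&=\sum_{a,b}c_{ab}\rho^a\Bigl(\int_{\mathcal{P}}\theta^bP(\theta)\,d\theta\Bigr)\\
&\quad-\Bigl(\sum_{a,b}c_{ab}\rho^a\int_{\mathcal{P}}\theta^b\,d\theta\Bigr)P(\rho),
\end{align*}
so the integral contributes a \emph{polynomial} matrix in $\rho$ whose coefficients are the (computable) moments of $\mathcal{P}$ against the monomials of $\lambda$ and of $\lambda P$. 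Consequently the whole left-hand side of~\eqref{eq:stab}, call it $M(\rho)$, is a polynomial matrix-valued function of $\rho$, and it is this closed-form polynomial that appears in the SOS program.

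Then I would simply read off the two SOS constraints of Box~\ref{box}: feasibility provides a polynomial $P(\cdot)$ together with SOS matrix multipliers $S_1,\dots,S_M$ and $T_1,\dots,T_M$ such that $P(\rho)-\eps I-\sum_i g_i(\rho)S_i(\rho)$ and $-M(\rho)-\sum_i g_i(\rho)T_i(\rho)$ are SOS matrices. Fixing an arbitrary $\rho_\star\in\mathcal{P}$ and using $g_i(\rho_\star)\ge0$, $S_i(\rho_\star)\succeq0$, $T_i(\rho_\star)\succeq0$ together with the nonnegativity of SOS matrices, one obtains $P(\rho_\star)\succeq\eps I\succ0$ and $M(\rho_\star)\preceq0$; since $\rho_\star$ was arbitrary, both hypotheses of Theorem~\ref{th:stab} are met by the computed $P$ with the prescribed $\alpha$, and exponential mean-square stability with rate $\alpha$ follows from that theorem.

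The only genuine point needing care---rather than a real difficulty---is the integral term: one must be certain that it is truly polynomial in $\rho$ (which rests on compactness of $\mathcal{P}$ together with polynomiality of $P$ and $\lambda$) and that the required moments $\int_{\mathcal{P}}\theta^b\,d\theta$ can actually be evaluated in closed form so that the SOS program is well posed---immediate for boxes, ellipsoids or simplices, but potentially more delicate for a general semialgebraic $\mathcal{P}$. I would also note that Theorem~\ref{th:stab} requires only the non-strict inequality~\eqref{eq:stab}, so the SOS relaxation of the second constraint (which yields $M(\rho)\preceq0$) is already sufficient; the rest is the routine ``certificate restricted to the set gives the inequality'' argument and needs no further work.
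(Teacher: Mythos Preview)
Your proposal is correct and follows essentially the same route as the paper: use that SOS matrices are globally positive semidefinite, then evaluate the two certificates at any $\rho_\star\in\mathcal{P}$ where $g_i(\rho_\star)\ge0$ to obtain $P(\rho_\star)\succeq\eps I\succ0$ and the LMI~\eqref{eq:stab}, and conclude via Theorem~\ref{th:stab}. Your explicit check that the integral term is polynomial in $\rho$ is a useful elaboration the paper leaves implicit (it is noted elsewhere that polynomials are closed under integration), but it does not change the structure of the argument.
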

\begin{proof}
  The condition (C1) implies that the matrices $\Gamma_1^i(\rho),\Gamma_2^i(\rho)$ are positive semidefinite for all $\rho\in\mathbb{R}^N$ and all $i=1,\ldots,M$. The condition (C2) implies that $\textstyle P(\rho)-\sum_{i=1}^M\Gamma_1^i(\rho)g_i(\rho)-\eps I_n\succeq0$ for all $\rho\in\mathbb{R}^N$ and, hence, that $\textstyle P(\rho)\succeq \sum_{i=1}^M\Gamma_1^i(\rho)g_i(\rho) +\eps I_n$  for all $\rho\in\mathbb{R}^N$ and that $P(\rho)\succeq\eps I_n\succ0$ for all $\rho\in\mathcal{P}$. This proves the positive definiteness condition for the matrix $P(\rho)$. Similarly, the condition (C3) implies that the LMI \eqref{eq:stab} holds for all $\rho\in\mathcal{P}$. The proof is completed.
\end{proof}
\begin{remark}
  The integral equality constraint in Theorem \eqref{eq:stabzL2} on $Z$ can be easily implemented in the SOS program as it is simply equality constraints on the coefficients on the matrix polynomial $Z$.
\end{remark}

It may be interesting to ask whether the SOS conditions are also necessary in the sense that if the conditions stated in Theorem \ref{th:stab} are feasible, then a solution to the SOS program exists. Interestingly, this is true under some mild conditions whenever $N=1$ and $M=1$ (i.e. single parameter case with single constraint). This is a consequence of the facts that, in the one-parameter/one-constraint case, we have the equivalence between a symmetric matrix being SOS and its positive definiteness, and that we can approximate any continuous function by a polynomial with arbitrary precision. See \cite{Briat:14f} and the references therein for a more detailed explanation.

\begin{mybox*}
\vspace{2mm}
\centering
\caption{SOS program associated with Theorem \ref{th:stab}}\label{box}
{\vspace{1mm}}
\noindent\fbox{
\parbox{0.85\textwidth}{
    Find polynomial matrices $P,\Gamma_1^i,\Gamma_2^i:\mathcal{P}\mapsto\mathbb{S}^n$, $i=1,\ldots,M$, such that
      \begin{enumerate}[(C1)]
        \item $\Gamma_1^i(\rho),\Gamma_2^i(\rho)$ are SOS matrices for all $i=1,\ldots,M$;
        \item $P(\rho)-\sum_{i=1}^M\Gamma_1^i(\rho)g_i(\rho)-\eps I_n$ is an SOS matrix;
        \item $-\He[P(\rho)A(\rho)]-\int_{\mathcal{P}}\lambda(\rho,\theta)[P(\theta)-P(\rho)]d\theta-\sum_{i=1}^M\Gamma_2^i(\rho)g_i(\rho)-2\alpha P(\rho)$ is an SOS matrix.
    \end{enumerate}}}
\end{mybox*}

\subsection{Simulating the system}

The difficulty here lies in the fact that the parameters evolve in a stochastic manner. It seems important to briefly describe here how parameter trajectories can be obtained for simulation purposes. First of all, it is interesting to note that jumps are Poissonian, that is, the time between two consecutive jumps is exponentially distributed with rate $\bar\lambda(\rho)$ where $\rho$ is the current value of the parameter. By virtue of the memoryless property of the exponential distribution, it can be easily decided when the next jump will occur. Deciding what will be the next parameter value is slightly more involved, yet possible. Assume that $N=1$ and that $\mathcal{P}=[0,\bar{\rho}]$. Define further the following cumulative distribution function
\begin{equation}
  F_\rho(\theta):=\dfrac{1}{\bar\lambda(\rho)}\int_0^\theta\lambda(\rho,s)ds,\ \theta\in[0,\bar\rho].
\end{equation}
To obtain the next value for the parameter assuming that the current value for the parameter is $\rho$, we simply need to pick a value $\nu$ that is uniformly distributed in the interval $[0,1]$, and choose the next value for the parameter, $\rho^+$, that solves the expression $F_\rho(\rho^+)=\nu$. Since, in the present case, $F_\rho$ is a rational function, then this amounts to solving for the roots of the polynomial $\textstyle\int_0^\theta\lambda(\rho,s)ds-\nu\bar\lambda(\rho)=0$ and picking the only one in $\mathcal{P}$. When several parameters are involved, Markov Chain Monte Carlo (MCMC) methods can be used to sample from a multidimensional distribution. The procedure is not described here but the interested reader may look at the book \cite{Devroye:86}.

\section{Examples}\label{sec:ex}

The calculations are all performed on a PC equipped with 16GB of RAM and a processor Intel i7-5600U @ 2.60Ghz.

\subsection{Stability analysis - dimension 2}

We consider here the system \eqref{eq:mainsyst}-\eqref{eq:mainsystR}  with the matrix
\begin{equation}\label{eq:ex1}
  A(\rho)=\begin{bmatrix}
    0 & 1\\
    2-\rho & -1
  \end{bmatrix},
\end{equation}
all the other matrices being zero. We also pick $\mathcal{P}=[0,\bar\rho]$, $\lambda(\rho,\theta)=\lambda_0$. Using polynomials of order 2, we get the results in Table \ref{tab:stabalpha} where we can see that, as expected, increasing $\bar\rho$ while keeping $\lambda_0$ constant, or vice-versa, makes the system more and more stable. This is because of the fact that, in the present case, the matrix $A(\rho)-\lambda_0\bar{\rho}I$ gets more and more stable as the product $\lambda_0\bar{\rho}$ increases. Note that, In this case, the average time between two consecutive jumps is given by $1/(\lambda_0\bar{\rho})$ whereas the next value for the parameter is drawn from the uniform distribution $\mathcal{U}(0,\bar\rho)$. Computational-wise, the SOS program has 131 primal variables and 48 dual variables, and it takes 0.2 second to solve it.

\begin{table*}
\centering
\caption{Evolution of the computed exponential mean-square stability rate $\alpha$ for the system \eqref{eq:mainsyst}-\eqref{eq:mainsystR} using Theorem \ref{th:stab} and polynomials of order 2 for the system  \eqref{eq:ex1}.}\label{tab:stabalpha}
\begin{tabular}{|c||cccccc|}
  \hline
 \backslashbox{$\lambda_0$}{$\bar{\rho}$} &  5 & 6 & 7 & 8 & 9 & 10\\
  \hline
  \hline
 0.1 &infeasible   &infeasible    &0.3589    &1.8509    &3.2588    &4.7941\\
 0.25 &infeasible    &0.7345    &3.3211    &4.6144    &7.5190    &8.0626\\
 0.5 &1.4279    &3.2212    &5.1031    &6.1485   &10.3547   &11.9752\\
 0.75 &2.2643    &3.8085    &7.2430    &6.2499   &10.9252   &14.0590\\
 1 &   2.8106    &5.4874    &8.6958   &10.9797   &13.2820   &15.4967\\
   \hline
  \end{tabular}
  \end{table*}
  
  \begin{table*}
  \centering
\caption{Evolution of the computed exponential mean-square stability rate $\alpha$ for the system \eqref{eq:mainsyst}-\eqref{eq:mainsystR} using Theorem \ref{th:stab} and polynomials of order 2 for the system  \eqref{eq:ex2}.}\label{tab:stabalpha2}
\begin{tabular}{|c||ccc|}
  \hline
 \backslashbox{$\lambda_0$}{$\bar{\rho}$} & 1 & 2 & 3\\
  \hline
  \hline
 1 & 0.1857  &  0.1713 &  infeasible\\
 3  & 0.1801  &  0.1640 &  infeasible\\
 7  & 0.1827  &  0.1822    & 0.0489\\
 12  &  0.1856 &   0.1922  &  0.3234\\
 25 & 0.1884  &  0.2005    &0.3967\\
   \hline
   \end{tabular}
  \end{table*}

%

\subsection{Stability analysis- dimension 4}

We consider here the system  \eqref{eq:mainsyst}-\eqref{eq:mainsystR}  with the matrix
\begin{equation}\label{eq:ex2}
  A(\rho)=\begin{bmatrix}
    \rho^2 & 1 & 1 & 0\\
    -2-\rho & -1 & 0 & 1\\
    1 & 0 & -3+\rho/2 & 0\\
    0 & 1 & 0 & -3+\rho/2
  \end{bmatrix},
\end{equation}
all the others being set to 0. We also pick the same parameters set and rate function for the parameters.  Using polynomials of order 2, we get the results in Table \ref{tab:stabalpha2} where we can observe a similar trend as in the previous example. Computational-wise, the number of primal/dual decision variables is 506/160 and it takes 0.3 second to solve the SOS program.
%

\subsection{Stabilization by state-feedback}

We consider now the system \eqref{eq:mainsyst}-\eqref{eq:mainsystR}  with the matrices
\begin{equation}
  A(\rho)=\begin{bmatrix}
    3-\rho & 1\\
    1-\rho & 2+\rho
  \end{bmatrix},B=\begin{bmatrix}
    0\\1+\rho
  \end{bmatrix},C=\begin{bmatrix}
    0\\ 1
  \end{bmatrix}^T,
\end{equation}
$D=F=0$, and $\mathcal{P}=[0,\bar\rho]$. Choosing $\bar\rho=1$ and $\lambda(\rho,\theta)=100$ and $\gamma=1$, we get the controller

\begin{equation}
\small
  K(\rho)=\begin{bmatrix}
128\dfrac{-132.6\rho^4 + 3.441\rho^3 + 512.3\rho^2 + 1086\rho - 853.1}{- 74.3\rho^4 + 82.09\rho^3 + 896.2\rho^2 - 881.7\rho + 962.5}\\
  64\dfrac{1.692\rho^4 + 349.1\rho^3 - 874.9\rho^2 - 1614\rho + 909.4}{- 7.43\rho^4 + 8.209\rho^3 + 89.62\rho^2 - 88.17\rho + 96.25}
  \end{bmatrix}^T.
\end{equation}
For simulation purposes, we set $w(t)=10(H(t)-H(t-1))$ where $H$ is the Heavyside step function. We can see in Figure \ref{fig:OL_rho} (top) that the open-loop system is unstable. At the bottom of the same figure, a typical random trajectory for the parameter is depicted. Since $\lambda=100$, then the average time between two successive jumps is $1/\lambda$, (i.e. 10ms) and then the next value for the parameter is simply drawn from $\mathcal{U}(0,1)$. Multiple state responses of the closed-loop system to the initial condition $[-2, 4]$ are depicted in Figure \ref{fig:XnoW} whereas the responses to the input $w$ (with zero initial conditions) are depicted in Figure \ref{fig:XW}. The SOS program has 2290/603 primal and dual variables, and it takes about a second to solve. The reason why the number of variables is much larger than in the previous examples is because the size of the parameter-dependent LMI in Theorem \ref{eq:stabzL2} is larger than that in Theorem \ref{th:stab}.

\begin{figure}
  \centering
  \includegraphics[width=0.65\textwidth]{./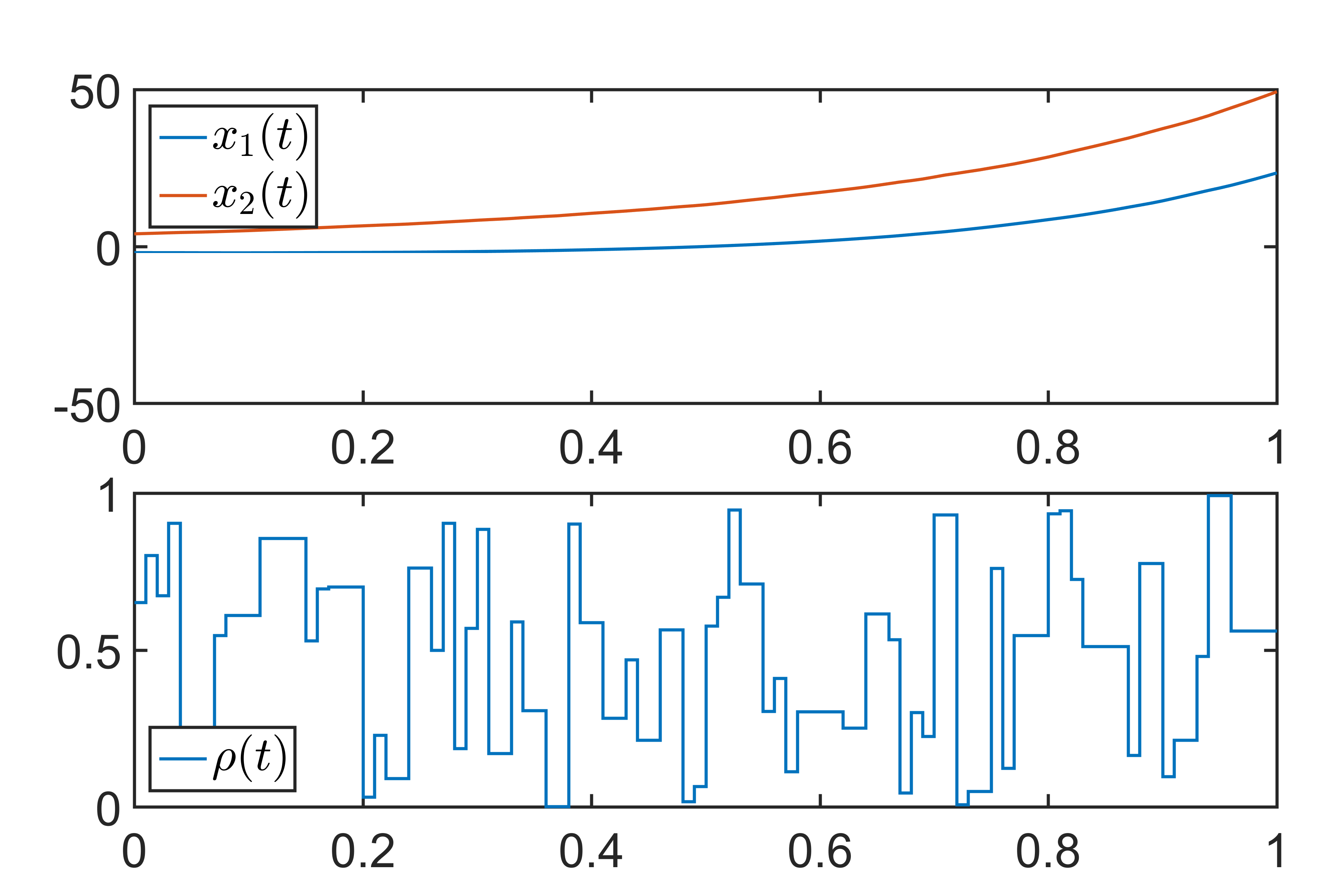}
  \caption{Evolution of the states of the open-loop system (top) and a typical trajectory for the parameter.}\label{fig:OL_rho}
\end{figure}

\begin{figure}
  \centering
  \includegraphics[width=0.65\textwidth]{./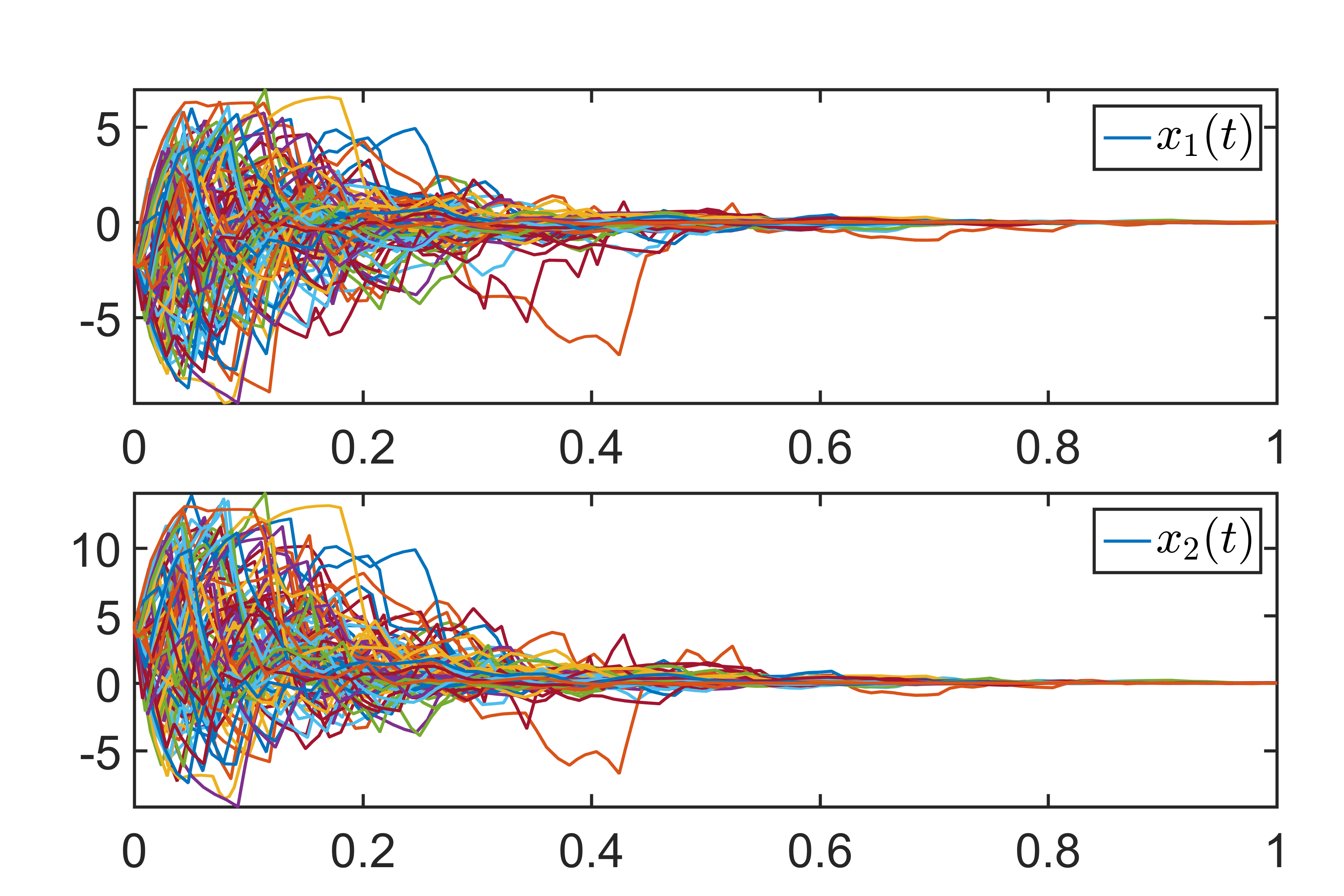}
  \caption{Evolution of the states of the closed-loop system for 100 realizations of the stochastic trajectories with no disturbance.}\label{fig:XnoW}
\end{figure}

\begin{figure}
  \centering
  \includegraphics[width=0.65\textwidth]{./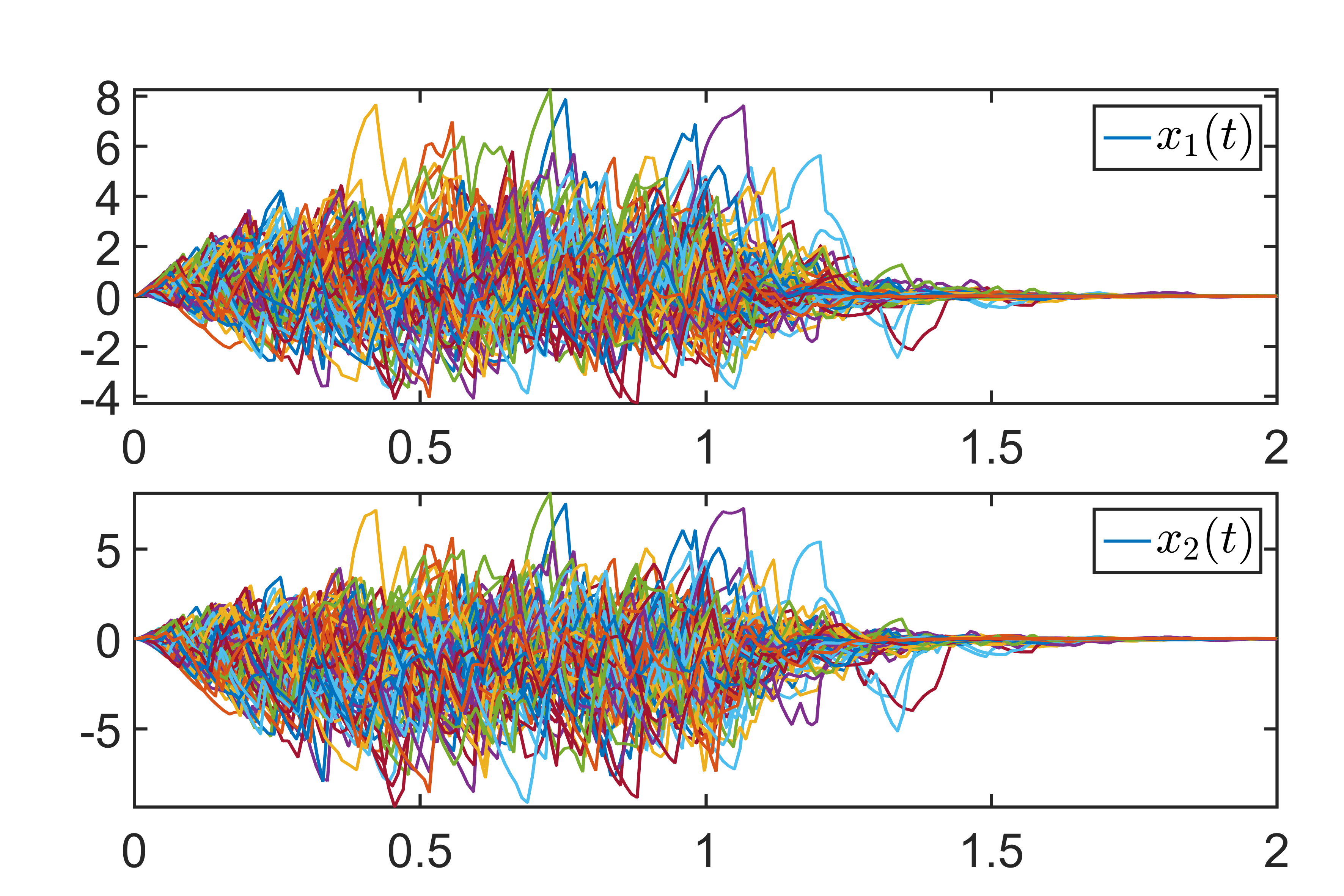}
  \caption{Evolution of the states of the closed-loop system for 100 realizations of the stochastic trajectories with disturbance.}\label{fig:XW}
\end{figure}



\section{Future work}

A natural first question is to what extent the mean-square stability condition in Theorem \ref{th:stab} (and that of Theorem \ref{th:stabL2}) is also necessary. While it is well known that parameter-dependent quadratic Lyapunov functions are in general sufficient for the stability of deterministic LPV systems, this may not be the case here in hindsight of the existing results on Markov jump linear systems for which the Lyapunov function $V(x,\sigma)=x^TP(\sigma)x$ (where $\sigma$ is the mode of the Markov process) is necessary and sufficient for establishing the mean-square stability. A second question is whether convex conditions for the design of dynamic output-feedback controllers can be obtained in a nonconservative way.



\end{document}